\newtheorem{thm}{Theorem}[section]
\newtheorem{lem}[thm]{Lemma}
\newtheorem{cor}[thm]{Corollary}
\theoremstyle{plain}
\newtheorem{main}{Theorem}
\theoremstyle{definition}
\newtheorem{rem}[thm]{Remark}
\newtheorem{eg}[thm]{Example}
\newcommand{\Z}{{\mathbb{Z}}}
\newcommand{\C}{{\mathbb{C}}}
\newcommand{\SU}{\operatorname{SU}}
\newcommand{\syp}{\operatorname{Sp}}
\newcommand{\U}{\operatorname{U}}
\renewcommand{\lim}[1]{\mathop{\underset{#1} {\underset \longleftarrow
{\text{\rm lim}}}}}
\newcommand{\Isom}{\operatorname{Isom}}
\newcommand{\diag}{\operatorname{diag}}
\newcommand{\bq}{/ \hspace{-.12cm} /}
\def\mt{\mapsto}
\newcommand{\g}{\mathfrak{g}}
\def\k{\mathfrak{k}}
\newcommand{\p}{\mathfrak{p}}
\def\x{\times}
\def\wh{\widehat}
\def\hra{\hookrightarrow}
\def\<{\langle}
\def\>{\rangle}
\def\met{\<\, ,\, \>}
\newcommand{\SUU}[2]{\operatorname{S}(\U(#1) \x \U(#2))}
\def\bsm{\begin{smallmatrix}}
\def\esm{\end{smallmatrix}}
\def\bpm{\begin{pmatrix}}
\def\epm{\end{pmatrix}}
\def\bbm{\begin{bmatrix}}
\def\ebm{\end{bmatrix}}
\def\beq{\begin{equation}}
\def\eeq{\end{equation}}
\renewcommand{\geq}{\geqslant}
\renewcommand{\leq}{\leqslant}
\numberwithin{equation}{section}
\newcommand{\spacing}[1]{\renewcommand{\baselinestretch}{#1}\large\normalsize}
\begin{document}

%======================
%	BEGIN FRONT MATTER
%======================

%---------------------------------------
%	TOP MATTER
%---------------------------------------

% TITLE AND AUTHORS

\title[Totally geodesic embeddings]{A note on totally geodesic embeddings of Eschenburg spaces into Bazaikin spaces}

\author[M.\ Kerin]{Martin Kerin$^\ast$}

\thanks{$^\ast$ This research was carried out as part of SFB 878: Groups, Geometry \& Actions, at the University of M\"unster.}

\address{Mathematisches Institut, Einsteinstr. 62, 48149 M\"unster, Germany}
\email{m.kerin@math.uni-muenster.de}

% MATH SUBJECT CLASSIFICATION AND KEYWORDS

\subjclass[2010]{53C20}
\keywords{positive curvature, biquotients, Lie groups, totally geodesic embeddings.}

% ABSTRACT

\begin{abstract}
In this note it is shown that every $7$-dimensional Eschenburg space can be totally geodesically embedded into infinitely many topologically distinct $13$-dimensional Bazaikin spaces.  Furthermore, examples are given which show that, under the known construction, it is not always possible to totally geodesically embed a positively curved Eschenburg space into a Bazaikin space with positive curvature.
\end{abstract}

\date{\today}

\maketitle
%\vspace{-.3cm}
%\begin{center}
%\textit{Dedicated to J.-H.\ Eschenburg on the occasion of his sixtieth birthday}
%\end{center}

%\footnotesize{
%\tableofcontents
%}

%======================
%	END FRONT MATTER
%======================

%======================
%	BEGIN MAIN MATTER
%======================

\normalsize
\thispagestyle{empty}

%==========================================================================================================================================================
%																	SECTION: INTRODUCTION
%==========================================================================================================================================================

%\section{Introduction and Preliminaries}\label{S:Introduction}

In dimensions $7$ and $13$ there are two very special families of closed Riemannian manifolds, namely the Eschenburg and Bazaikin spaces.  These are defined as quotients of $\SU(3)$ and $\SU(5)/\syp(2)$ by free, isometric circle actions (see Section \ref{S:EschBaz} and \cite{AW}, \cite{Baz}, \cite{DE}, \cite{Es}, \cite{Ke}, \cite{Zi}), where $\SU(k)$ has been equipped with a left-invariant, right $\U(k-1)$-invariant metric.  In each case there are infinitely many (distinct homotopy types of) family members admitting positive sectional curvature.  Given that there are so few known examples of closed manifolds with positive sectional curvature, these families has been studied extensively (see, for example, \cite{Baz}, \cite{CEZ}, \cite{DE}, \cite{Es}, \cite{Es2}, \cite{FZ}, \cite{GSZ}, \cite{Ke}, \cite{Kr}, \cite{Zi}).  

One particularly intriguing observation, made in \cite{Ta}, is that to each Bazaikin space there can be associated a totally geodesic, embedded Eschenburg space.  In fact, as demonstrated in \cite{DE}, a Bazaikin space generically contains ten mutually distinct, totally geodesic, embedded Eschenburg spaces.  This led to the question: given an Eschenburg space, does there exist a (non-singular) Bazaikin space containing it as a totally geodesic, embedded submanifold?

\begin{main}
\label{Thm A}
For any given Eschenburg space $E^7_{a,b}$, there exist infinitely many mutually non-homotopy equivalent Bazaikin spaces into which $E^7_{a,b}$ can be embedded as a totally geodesic submanifold.
\end{main}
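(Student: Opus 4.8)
The plan is to invert the correspondence of \cite{Ta} and \cite{DE}. Recall that a Bazaikin space $B^{13}_q$ is the quotient of $\SU(5)/\syp(2)$ by a free, isometric circle action determined by a weight vector $q = (q_1,\dots,q_5)$, and that the known construction assigns to $B^{13}_q$ a totally geodesic, embedded Eschenburg space whose weight data $(a,b)$ are \emph{explicit functions of} $q$---certain differences and partial sums of the $q_i$, arising on the $\C^4 \cong \H^2$ factor acted on by $\syp(2)$---one such Eschenburg space for each of the ten totally geodesic copies exhibited in \cite{DE}. In these terms, Theorem~\ref{Thm A} asserts that the assignment $q \mapsto (a,b)$ is surjective onto all non-singular Eschenburg data and that, in fact, each of its fibres is infinite and meets infinitely many homotopy types.

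First I would make the dictionary $q \mapsto (a,b)$ fully explicit and then solve it in reverse: given $E^7_{a,b}$, I would exhibit a one-parameter family $q = q(t)$, $t \in \Z$, of admissible weight vectors whose associated Eschenburg data equals $(a,b)$---up to the permutations, shifts and conjugations under which Eschenburg spaces coincide---for every $t$, with $t$ entering only through those combinations of the $q_i$ that are invisible to the induced data. As the equations relating $(a,b)$ to $q$ are linear in the symmetric quantities involved, constructing such a family reduces, once a free coordinate has been isolated, to linear algebra over $\Z$.

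The crux is smoothness. For each $t$ one must verify that the $S^1 \times \syp(2)$-action defining $B^{13}_{q(t)}$ is free, i.e.\ that the relevant coprimality conditions on $q(t)$ hold; these are more delicate than the corresponding Eschenburg conditions and must be shown to persist for infinitely many $t$ \emph{within} the constrained family that already fixes $(a,b)$. I expect this to be the main obstacle. The strategy is to reduce freeness of the circle action to a finite list of gcd statements among linear forms in $t$ and the fixed data $(a,b)$, and then to argue that, after restricting $t$ to a suitable arithmetic progression if necessary, all of them hold simultaneously for all but finitely many $t$.

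Finally, to separate homotopy types I would use the order of the finite cohomology group of $B^{13}_{q(t)}$ that plays, in the $13$-dimensional setting, the role of the Eschenburg invariant $r = |\sigma_2(a) - \sigma_2(b)|$ (for which $H^4(E^7_{a,b}) \cong \Z/r$); for Bazaikin spaces this is the order of the torsion in degree eight, given by a symmetric polynomial of degree four in the weights $q_i$. Along $q(t)$ this order is a non-constant integer polynomial in $t$, so it takes infinitely many values; since a homotopy equivalence preserves it, the members of the family realising distinct values are pairwise non-homotopy equivalent, yielding the desired infinite collection. Note that no curvature hypothesis is imposed on the $B^{13}_{q(t)}$, which is precisely what lets $t$ range without constraint.
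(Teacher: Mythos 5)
Your overall strategy is the paper's: the ``free coordinate'' you are looking for is the integer shift $c$ under which $E^7_{a,b}=E^7_{a+(c,c,c),\,b+(c,c,c)}$ isometrically, while the candidate Bazaikin weights $q^c=(2(a_1+c)+1,2(a_2+c)+1,2(a_3+c)+1,-(2(b_2+c)+1),-(2(b_3+c)+1))$ genuinely change with $c$. But both steps you defer contain real gaps. For freeness, the condition reduces to $d^c_{k\ell}=\gcd(a_i+a_j+1+2c,\,a_k-b_\ell)=1$ for all $\ell\in\{i,j,k\}=\{1,2,3\}$, and your plan of restricting $t$ to an arithmetic progression only works if, for each prime $p$ dividing some $a_k-b_\ell$, the forbidden residues of $c$ modulo $p$ do not exhaust $\Z/p$. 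That is precisely where the Eschenburg freeness condition (\ref{freeness}) must be invoked: it forces any prime dividing two distinct differences $a_t-b_u$, $a_k-b_\ell$ to satisfy $t=k$ or $u=\ell$, and in either case the excluded residue class of $c$ mod $p$ turns out to be the \emph{same}, so only one class is forbidden; one also needs that $p=2$ never occurs (the paper's Lemma \ref{lem:oddprimes}). Without these observations, ``all but finitely many $t$'' is not justified. (The paper sidesteps the progression argument by exhibiting an explicit sequence $c=c_\mu$ built from the offending primes; see Lemma \ref{lem:free}.)

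The second gap is more serious because the step can actually fail as stated. The invariant is $H^6(B^{13}_{q^c})\cong\Z_s$ with $s=\tfrac18\,|\sigma_3(q^c_1,\dots,q^c_5,-\sum q^c_i)|$ --- not torsion in degree eight governed by $\sigma_4$ --- and a direct computation gives
$$
\sigma_3 = 8(\sigma_3(a)-\sigma_3(b)) - 8(\sigma_1(a)+2c+1)(\sigma_2(a)-\sigma_2(b)),
$$
a degree-one polynomial in $c$ with leading coefficient $-16(\sigma_2(a)-\sigma_2(b))$. Your assertion that this is ``a non-constant integer polynomial in $t$'' is exactly the claim $\sigma_2(a)\neq\sigma_2(b)$, which does not follow from anything in the construction; it requires the external input that $|H^4(E^7_{a,b})|=|\sigma_2(a)-\sigma_2(b)|$ is always odd, hence nonzero \cite[Remark 1.3]{Kr}. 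One must also account for the absolute value: $|\sigma_3(q^c)|=|\sigma_3(q^d)|$ can hold with opposite signs, but this pins down $c+d$ and so excludes at most one further pair, leaving infinitely many distinct orders of $H^6$ and hence infinitely many homotopy types.
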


In \cite{DE} it has also been proven that a Bazaikin space is positively curved if and only if each of the ten embedded, totally geodesic Eschenburg spaces it contains are also positively curved.  On the other hand, it is well-known (see \cite{DE}, \cite{Zi}) that all positively curved Aloff-Wallach spaces (see \cite{AW}), that is, the subfamily of homogeneous Eschenburg spaces, and all positively curved cohomogeneity-one Eschenburg spaces (see \cite{GSZ}) can be embedded as totally geodesic submanifolds of positively curved Bazaikin spaces.  This raises the question of whether this is true in general.  However, at least for the known construction of a totally geodesic embedding, there exist counter-examples already in cohomogeneity-two, see Table \ref{tab:fail} in Section \ref{S:curv}.

The article is organised as follows.  In Section \ref{S:Biqs} the basic notation and definitions for biquotients are reviewed.  Section \ref{S:EschBaz} provides a brief summary of Eschenburg and Bazaikin spaces, while Section \ref{S:TGSubmBaz} recalls how to each Bazaikin space there correspond ten totally geodesic, embedded Eschenburg spaces.  In Section \ref{S:Converse} the proof of Theorem \ref{Thm A} is given and, finally, in Section \ref{S:curv} the ability to embed a positively curved Eschenburg space as a totally geodesic submanifold of a positively curved Bazaikin space is discussed.

% ACKNOWLEDGMENTS

%\begin{ack} The first named author thanks B.\ Wilking and K.\ Grove for useful conversations.  The second named author wishes to thank J.\ DeVito for several interesting and helpful discussions.
%\end{ack}

\section{Biquotients and induced metrics}
\label{S:Biqs}

The following section provides a review of some material from \cite{Es} which establishes the basic ideas and notation which will be used throughout the remainder of the article.

Let $G$ be a compact Lie group, $U \subset G \x G$ a closed subgroup, and let $U$ act (effectively) on $G$ via
\beq
\label{eq:action}        
(u_1, u_2) \star g = u_1 g u_2^{-1}, \ \  g \in G, (u_1, u_2) \in U.
\eeq
The resulting quotient $G \bq U$ is called a {\it biquotient}.  The action (\ref{eq:action}) is free if and only if, for all non-trivial $(u_1, u_2) \in U$, $u_1$ is never conjugate to $u_2$ in $G$.  In the event that the action of $U$ is ineffective, the quotient $G \bq U$ will be a manifold so long as any element $(u_1, u_2) \in U$ which fixes some $g \in G$ fixes all of $G$, that is, $(u_1, u_2)$ lies in the ineffective kernel.  A biquotient is \emph{non-singular} if the action (\ref{eq:action}) is free (modulo any ineffective kernel), and \emph{singular} otherwise.

Let $K \subset G$ be a closed subgroup.  Suppose we have a biquotient $G \bq U$, where $U \subset G \x K \subset G \x G$ and $G$ is equipped with a left-invariant, right $K$-invariant metric $\met$.  Then $U$ acts by isometries on $G$ and therefore the submersion $G \to G \bq U$ induces a metric on $G \bq U$ from the metric on $G$.  We encode this in the notation $(G, \met) \bq U$.  

Now, for $g \in G$ define
        \begin{align*}
            U^g_L &:= \{(g u_1 g^{-1}, u_2) \ | \ (u_1, u_2) \in U \},\\
            U^g_R &:= \{(u_1, g u_2 g^{-1}) \ | \ (u_1, u_2) \in U \},\ \ {\rm and}\\
            \wh U &:= \{(u_2, u_1) \ | \ (u_1, u_2) \in U \}.
        \end{align*}
Then $U^g_L, U^g_R$ and $\wh U$ act freely on $G$, and $G \bq U$ is isometric to $G \bq U^g_L$, diffeomorphic to $G \bq U^g_R$ (isometric if $g \in K$), and diffeomorphic to $G \bq \wh U$ (isometric if $U \subset K \x K$).

In the case of $U^g_L$ this follows from the fact that left-translation $L_g : G \to G$ is an isometry which satisfies $g u_1 g^{-1}( L_g g') u_2^{-1} = L_g (u_1 g' u_2^{-1})$.  Therefore $L_g$ induces an isometry of the orbit spaces $G\bq U$ and $G\bq U^g_L$.  Similarly we find that $R_{g^{-1}}$ induces a diffeomorphism between $G\bq U$ and $G \bq U^g_R$, which is an isometry if $g \in K$.

Consider now $\wh U$.  The actions of $U$ and $\wh U$ are equivariant under the diffeomorphism $\tau : G \to G$, $\tau(g) := g^{-1}$.  That is, $u_1 \tau(g) u_2^{-1} = \tau(u_2 g u_1^{-1})$.  Notice that this is an isometry only if $U \subset K \x K$.  In general $G\bq U$ and $G\bq \wh U$ are therefore diffeomorphic but not isometric.

For completeness we remark that it is, in fact, simple to equip $G$ with a left-invariant, right $K$-invariant metric.  Given $K \subset G$, let $\k \subset \g $ be the corresponding Lie algebras and let $\met_0$ be a bi-invariant metric on $G$.  One can write $\g = \k \oplus \p$ with respect to $\met_0$.  Recall that $G \cong (G \x K)/ \Delta K$ via $(g,k) \mt g k^{-1}$, where $\Delta K = \{(k,k) \mid k \in K\}$ acts diagonally on the right of $G \x K$.  Thus we may define a left-invariant, right $K$-invariant metric $\met$ on $G$ via the Riemannian submersion
        \begin{align*}
            (G \x K, \met_0 \oplus t \met_0 |_\k) &\to (G, \met)\\
            (g,k) &\mt g k^{-1},
        \end{align*}
where $t>0$ and
        \beq
        \label{met1}
            \met = \met_0 |_\p + \lambda \met_0 |_{\k}, \ \ \lambda = \frac{t}{t+1} \in (0,1).
        \eeq

%=====================================================
%    DEFINITIONS OF ESCHENBURG AND BAZAIKIN SPACES
%=====================================================

\section{Eschenburg and Bazaikin spaces}
\label{S:EschBaz}

Given $a = (a_1, a_2, a_3)$, $b = (b_1, b_2, b_3) \in \Z^3$, with $\sum a_i = \sum b_i$, recall that the Eschenburg biquotients (see \cite{AW}, \cite{Es}) are defined as $E^7_{a,b} := (\SU(3), \met) \bq S^1_{a,b}$, where $S^1_{a,b}$ acts isometrically on $(\SU(3), \met)$ via
$$
z \star A = 
\diag(z^{a_1}, z^{a_2}, z^{a_3}) \cdot A \cdot \diag(\bar z^{b_1}, \bar z^{b_2}, \bar z^{b_3}), \ \ A \in \SU(3), z \in S^1,
$$
and the left-invariant, right $\U(2)$-invariant metric $\met$ (with $\sec \geq 0$) on $\SU(3)$ is defined as in (\ref{met1}), where $\U(2) \hra \SU(3)$ via
$$
A \in \U(2) \mt \diag(\overline{\det(A)}, A) \in \SU(3).
$$
The action is free if and only if
        \beq
        \label{freeness}
            \gcd(a_1 - b_{\sigma(1)}, a_2 - b_{\sigma(2)}) = 1 \ \  \textrm{for all permutations} \ \ \sigma \in S_3,
        \eeq
in which case $E^7_{a,b}$ is called an \emph{Eschenburg space}.

It is important to remark that the above defined circle subgroup $S^1_{a,b}$ is not, in general, a subgroup of $\SU(3) \x \SU(3)$.  Indeed, $S^1_{a,b} \subset \SUU{3}{3} := \{(A,B) \in \U(3) \x \U(3) \mid \det A = \det B\}$.  This is not a problem, however, since the bi-invariant metric on $\SU(3)$ can be thought of as the restriction of the the analogously defined bi-invariant metric on $\U(3)$.  Hence, an element of $(A,B) \in \SUU{3}{3}$ maps $(\SU(3), \met_0)$ isometrically to itself via $X \in \SU(3) \mt A X B^{-1}$.  In particular, conjugation by an element of the centre of $\U(3)$ is an isometry (namely, the identity map) of $(\SU(3), \met_0)$, and remains an isometry with respect to the new metric $\met$.  Therefore the Eschenburg biquotient $E^7_{a',b'}$ defined by the action of the circle $S^1_{a',b'}$, where $a' = (a_1 + c, a_2 + c, a_3 + c)$ and $b' = (b_1 + c, b_2 + c, b_3 + c)$, with $c \in \Z$, is isometric to $E^7_{a,b}$.  Furthermore, introducing an ineffective kernel to the circle action will not alter the isometry class of the biquotient.  Thus $E^7_{\tilde a, \tilde b}$ defined by $\tilde a = (k a_1, k a_2, k a_3)$ and $\tilde b = (k b_1, k b_2, k b_3)$ is isometric to $E^7_{a,b}$.  In particular, it follows that a circle action by $S^1_{a,b} \subset \SUU{3}{3}$ can then be rewritten as the action of a circle subgroup of $\SU(3) \x \SU(3)$ via the change of parameters $(a_1, a_2, a_3, b_1, b_2, b_3) \mt (3 a_1 - \kappa, 3 a_2 - \kappa, 3 a_3 - \kappa, 3 b_1 - \kappa, 3 b_2 - \kappa, 3 b_3 - \kappa)$, where $\kappa := \sum a_i = \sum b_i$, without changing the isometry class.

From Section \ref{S:Biqs} it is clear that, for the $S^1_{a,b}$-action, permuting the $a_i$ (via the action of the Weyl group of $\SU(3)$) and permuting $b_2, b_3$ are isometries, while permuting all of the $b_i$ and swapping $a$, $b$ are diffeomorphisms.  Indeed, given our fixed choice of embedding $\U(2) \hra \SU(3)$, cyclic permutations of the $b_i$ (and, similarly, swapping $a$ and $b$ and considering cyclic permutations of the $a_i$) induce, in general, non-isometric metrics on the quotient $E^7_{a,b}$.  

It was shown in \cite{Es} that an Eschenburg biquotient $E^7_{a,b} = (\SU(3), \met) \bq S^1_{a,b}$ has positive curvature if and only if $b_i \not\in [\underline{a}, \overline{a}]$ for all $i=1,2,3$, where $\underline{a} := \min \{a_1, a_2, a_3 \}$, $\overline{a} := \max \{a_1, a_2, a_3 \}$.

\medskip
In order to define the Bazaikin spaces (see \cite{Baz}, \cite{Zi}, \cite{DE}), first let
$$ 
\syp(2) \cdot S^1_{q_1, \dots, q_5} = (\syp(2) \x S^1_{q_1, \dots, q_5})/\Z_2,\ \ \ \Z_2 = \{\pm(1, I)\},
$$
where $q_1, \dots, q_5 \in \Z$ and $\syp(2)$ is considered as a subgroup of $\SU(4)$ via the inclusion $\syp(2) \hra \SU(4)$ given by
\beq
\label{eq:embed}
A = S + T j \in \syp(2) \mt \hat A = \bpm S & T \\ - \bar T & \bar S \epm \in \SU(4), \ \ \ S, T \in M_2(\C).
\eeq
Equip $\SU(5)$ with a left-invariant and right $\U(4)$-invariant metric $\met$ as defined in (\ref{met1}), where $\U(4) \hra \SU(5)$ via $A \in \U(4) \mt \diag(\overline{\det A}, A) \in \SU(5)$.

Then $ \syp(2) \cdot S^1_{q_1, \dots, q_5}$ acts effectively and isometrically on $(\SU(5), \met)$ via
        $$[A,z] \star B = \diag(z^{q_1}, \dots, z^{q_5}) \cdot  B \cdot \diag(\bar z^q, \hat A),$$
with $q := \sum q_i$, $z \in S^1$, $B \in \SU(5)$, and $A \in \syp(2) \subset \SU(4)$.  The quotient $B^{13}_{q_1, \dots, q_5} := (\SU(5), \met) \bq \syp(2) \cdot S^1_{q_1, \dots, q_5}$ is called a Bazaikin biquotient. 

It is not difficult to show that the action of $\syp(2) \cdot S^1_{q_1, \dots, q_5}$ is free (hence $B^{13}_{q_1, \dots, q_5}$ is a \emph{Bazaikin space}) if and only all $q_1, \dots, q_5$ are odd and
        \beq
        \label{freeBaz}
            \gcd(q_{\sigma(1)} + q_{\sigma(2)}, q_{\sigma(3)} + q_{\sigma(4)}) = 2 \ \ \textrm{for all permutations} \ \ \sigma \in S_5.
        \eeq
From the discussion in Section \ref{S:Biqs} it follows that permuting the $q_i$ is an isometry of $B^{13}_{q_1, \dots, q_5}$.  Furthermore, it is well-known (see \cite{Zi}, \cite{DE}) that a general Bazaikin biquotient $B^{13}_{q_1, \dots, q_5}$ admits positive curvature if and only if $q_i + q_j > 0$ (or $<0$) for all $1 \leq i < j \leq  5$.

%============================================================================================
%	SECTION RECALLING HOW ESCHENBURG SPACES ARISE AS TOTALLY GEODESIC SUBMANIFOLDS
%============================================================================================

\section{Totally geodesic submanifolds of Bazaikin spaces}
\label{S:TGSubmBaz}

Consider the Lie group $\SU(5)$ equipped with the left-invariant, right $\U(4)$-invariant metric $\met$ as described in Section \ref{S:EschBaz}.  Let $\sigma : (\SU(5), \met) \to (\SU(5), \met)$ be the isometric involution defined by 
\beq
\label{eq:invol}
\sigma(A) = \diag(1,1,1,-1,-1)\cdot A \cdot \diag(1,1,1,-1,-1), \ \ \ A \in \SU(5).
\eeq
The fixed point set of this involution is totally geodesic in $(\SU(5), \met)$ and given by
$$
\SUU{3}{2} = \{(\diag(B,C) \in \U(3) \x \U(2) \mid \det B = \overline{\det C} \}.
$$
Given the embedding (\ref{eq:embed}) of $\syp(2)$ into $\SU(5)$, it follows that the intersection $\SUU{3}{2} \cap \syp(2)$ is $\Delta\U(2) := \{(\diag(1,C,\bar C) \mid C \in \U(2) \}$.

Hence the image of $\SUU{3}{2}$ in the quotient $\SU(5)/\syp(2)$ is the totally geodesic submanifold $\SUU{3}{2}/ \Delta\U(2)$.  Every element of $\SUU{3}{2}/ \Delta\U(2)$ has a unique representative of the form 
$\diag(B, I)$, where 
\beq
\label{eq:SU(3)}
B = X \left(\bsm 1 & \\ & Y^t \esm \right) \in \SU(3), \ \ \ 
\textrm{with } \ X \in \U(3), Y \in \U(2) \ \ 
\textrm{and } \det Y = \overline{\det X}.
\eeq
Therefore $\SUU{3}{2}/ \Delta\U(2)$ is a totally geodesic copy of $\SU(3)$ inside $\SU(5)/\syp(2)$.

The induced metric on $\SU(3) \cong \SUU{3}{2}/ \Delta\U(2) \subset \SU(5)/\syp(2)$ is invariant under $\SU(3) \x \U(2)$ and, after the identification (\ref{eq:SU(3)}), one may consider this metric as a left-invariant, right $\U(2)$-invariant metric on $\SU(3)$ of the form described in Section \ref{S:EschBaz}.  Indeed, the identity component of the total isometry group of $\SU(3) \cong \SUU{3}{2}/ \Delta\U(2)$ is given by
\begin{align*}
\Isom (\SU(3)) &= \left\{ \left( \bpm Z & \\ & W \epm,
                                      \bpm w & \\ & I \epm
                               \right) 
                                      \mid  (Z, W) \in U(3) \x U(2), w = (\det Z)(\det W)
                                      \right\}\\
&\cong \U(3) \x \U(2)
\end{align*}
and acts on $\SU(3) \cong \SUU{3}{2}/ \Delta\U(2)$ via
\begin{align}
\nonumber
\left(\bpm Z & \\ & W \epm,
\bpm w & \\ & I \epm \right) 
\star \left[\bpm B & \\ & I \epm \right] &= 
\left[\bpm Z B \left(\bsm \bar w & \\ & I \esm \right) & \\ & W \epm \right]\\
\label{eq:Isomgp}
&= \left[\bpm ZB \left(\bsm \bar w & \\ & W^t \esm \right) & \\ & I \epm \right]
\end{align}
where $[\diag(B,I)]  \in \SU(3)$, $(Z,W) \in \U(3) \x \U(2)$ and $w = (\det Z)(\det W)$.

Now, if the isometric left-action of $S^1_{q_1, \dots, q_5}$ on $\SU(5)/\syp(2)$ is free with ineffective kernel $\{\pm 1\}$, the same must be true of the induced action on the totally geodesic submanifold $\SU(3) \cong \SUU{3}{2}/ \Delta\U(2)$.  From (\ref{eq:Isomgp}) it is clear that the action on $\SU(3)$ is given by
$$
z \star B = 
\diag(z^{q_1}, z^{q_2}, z^{q_3}) B \diag(\bar z^{q}, z^{q_4}, z^{q_5}), \ \ B \in \SU(3), z \in S^1,
$$
where $q = \sum q_i$.  The quotient is an Eschenburg space $\SU(3) \bq S^1_{q_1, \dots, q_5}$ totally geodesically embedded in the Bazaikin space $B^{13}_{q_1, \dots, q_5}$.  The circle action defining the Eschenburg space can be made effective.  Indeed, since the $q_i$ are odd and from the discussion of reparameterisations in Section \ref{S:EschBaz}, it follows that the Eschenburg space $\SU(3) \bq S^1_{q_1, \dots, q_5}$ is isometric to the Eschenburg space $E^7_{a,b}$, where $a = (a_1, a_2, a_3) = (\tfrac{1}{2}(q_1 - 1), \tfrac{1}{2}(q_2 - 1), \tfrac{1}{2}(q_3 - 1))$ and $b = (b_1, b_2, b_3) = (\tfrac{1}{2}(q - 1), - \tfrac{1}{2}(q_4 + 1),- \tfrac{1}{2}(q_5 + 1))$.

It is now simple to recover the remaining nine totally geodesic, embedded Eschenburg spaces in $B^{13}_{q_1, \dots, q_5}$.  As every permutation $\sigma \in S_5$ of the $q_i$ is an isometry, it follows that $B^{13}_{q_1, \dots, q_5} = B^{13}_{q_{\sigma(1)}, \dots, q_{\sigma(5)}}$ and hence, by the same reasoning as before, the Eschenburg space $E^7_{a_\sigma,b_\sigma}$, where $a_\sigma = (\tfrac{1}{2}(q_{\sigma(1)} - 1), \tfrac{1}{2}(q_{\sigma(2)} - 1), \tfrac{1}{2}(q_{\sigma(3)} - 1))$ and $b_\sigma = (\tfrac{1}{2}(q - 1), - \tfrac{1}{2}(q_{\sigma(4)} + 1),- \tfrac{1}{2}(q_{\sigma(5)} + 1))$, is a totally geodesic, embedded submanifold.  That generically there are ten such submanifolds follows since permutations of the entries of $a_\sigma$, and of the last two entries of $b_\sigma$, are isometries.  This, of course, is equivalent to fixing the order of the $q_i$ and permuting the signs of the entries along the diagonal in the involution (\ref{eq:invol}) acting on $\SU(5)$, thus achieving ten (isometric) copies of $\SU(3)$ which quotient to the desired Eschenburg spaces (see \cite{DE}).

%======================================================
%   EVERY ESCH SPACE EMBEDS TOT GEO INTO A BAZ SPACE
%======================================================

\section{Totally geodesic embeddings of Eschenburg spaces}
\label{S:Converse}

In Section \ref{S:TGSubmBaz} it was shown that every Bazaikin space contains a totally geodesically embedded Eschenburg space.  The converse statement, namely that every Eschenburg space can be totally geodesically embedded into a Bazaikin space, is also true.  Indeed, it will now be shown that every Eschenburg space can be totally geodesically embedded into infinitely many Bazaikin spaces.

Let $E^7_{a,b}$ be the Eschenburg space given by $a = (a_1, a_2, a_3)$, $b = (b_1, b_2, b_3) \in \Z^3$ with $\sum a_i = \sum b_i$ and satisfying the freeness condition (\ref{freeness}):
$$
\gcd(a_1 - b_{\sigma(1)}, a_2 - b_{\sigma(2)}) = 1 \ \  \textrm{for all permutations} \ \ \sigma \in S_3.
$$
By the discussion in Section \ref{S:TGSubmBaz}, a candidate for a Bazaikin space into which to embed is given by the $5$-tuple $(q_1, \dots, q_5) := (2 a_1 + 1, 2 a_2 + 1, 2 a_3 + 1, -(2 b_2 + 1), -(2 b_3 + 1))$, with $q = \sum q_i = 2 b_1 + 1$.
As each of the $q_i$ is odd, one need only check the condition (\ref{freeBaz}):
$$
\gcd(q_{\sigma(1)} + q_{\sigma(2)}, q_{\sigma(3)} + q_{\sigma(4)}) = 2 \ \ \textrm{for all permutations} \ \ \sigma \in S_5.
$$
It follows from the Eschenburg freeness condition (\ref{freeness}) (and from $\sum a_i = \sum b_i$) that $\gcd(q_4 + q_i, q_5 + q_j) = 2$ for all $i \neq j \in \{1,2,3\}$.  Furthermore, if $\{i,j,k\} = \{1,2,3\}$ and $\{\ell,m\} = \{4,5\}$, then since $q = \sum q_i$ it follows that
\begin{align*}
\gcd(q_i + q_j, q_k + q_\ell) &= \gcd(q_i + q_j, q - q_m)\\
                             &= \gcd(q - q_m, q_k + q_\ell)\\
\textrm{and that }\ \ \gcd(q_i + q_j, q_4 + q_5) &= \gcd(q_i + q_j, q - q_k).
\end{align*}

Hence the $5$-tuple $(q_1, \dots, q_5) = (2 a_1 + 1, 2 a_2 + 1, 2 a_3 + 1, -(2 b_2 + 1), -(2 b_3 + 1))$ defines a Bazaikin space if and only if
\beq
\label{eq:NonSing}
d_{k \ell} := \gcd(a_i + a_j + 1, a_k - b_\ell) = 1 \ \ \textrm{for all} \ \ \ell \in \{i,j,k\} = \{1,2,3\}.
\eeq

\begin{lem}
\label{lem:oddprimes}
Let $a = (a_1, a_2, a_3), b = (b_1, b_2, b_3) \in \Z^3$ satisfy (\ref{freeness}) and $\sum a_i = \sum b_i$.  Suppose that $p$ is a prime divisor of $\gcd(a_i + a_j + 1, a_k - b_\ell)$, where $\ell \in \{i,j,k\} = \{1,2,3\}$.  Then $p$ is odd.
\end{lem}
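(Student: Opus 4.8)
The plan is to argue by contradiction: suppose $p = 2$, so that both $a_i + a_j + 1$ and $a_k - b_\ell$ are even. The first condition forces $a_i$ and $a_j$ to have opposite parities, while the second gives $a_k \equiv b_\ell \pmod 2$. Before exploiting these, I would first record a consequence of the freeness hypothesis together with $\sum a_m = \sum b_m$: for any permutation $\sigma \in S_3$ the three differences $a_1 - b_{\sigma(1)}$, $a_2 - b_{\sigma(2)}$, $a_3 - b_{\sigma(3)}$ sum to zero, so the condition (\ref{freeness}) on two of them in fact implies that any two of the three are coprime, and in particular that at most one of them can be even.

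The heart of the argument is then a parity count. Writing $\epsilon := a_k \bmod 2$, the opposite parities of $a_i, a_j$ show that the multiset of residues mod $2$ of $(a_1, a_2, a_3)$ is $\{0, 1, \epsilon\}$. I would next use $\sum a_m \equiv \sum b_m \pmod 2$ together with $b_\ell \equiv a_k = \epsilon$ to deduce that the two entries of $b$ other than $b_\ell$ have residues summing to $1$, hence opposite parities; thus the multiset of residues mod $2$ of $(b_1, b_2, b_3)$ is likewise $\{0, 1, \epsilon\}$.

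Since $a$ and $b$ now have the same multiset of parities, there is a permutation $\sigma \in S_3$ with $a_m \equiv b_{\sigma(m)} \pmod 2$ for every $m$, so that all three differences $a_m - b_{\sigma(m)}$ are even. This contradicts the coprimality observation of the first step (indeed, two even differences already suffice), and therefore $p$ cannot equal $2$, i.e.\ $p$ is odd.

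I expect the only genuine step to be the parity bookkeeping in the second paragraph---specifically, extracting from $\sum a_m = \sum b_m$ that $b$ inherits exactly the same multiset of parities as $a$. Everything else is a direct translation of the even-divisibility hypotheses and the elementary fact that equal parity multisets admit a parity-matching permutation.
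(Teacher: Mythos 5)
Your proof is correct and rests on the same underlying idea as the paper's: the freeness condition (\ref{freeness}) together with $\sum a_i = \sum b_i$ forbids the parities of $a$ and $b$ from both being mixed, since matching parity multisets would produce a permutation with at least two even differences $a_m - b_{\sigma(m)}$. The paper organizes this in the contrapositive (either all $a_i$ or all $b_i$ share a parity, then transfers the oddness of $b_m+b_n+1$ to $a_i+a_j+1$ via the identity $\gcd(a_i+a_j+1,a_k-b_\ell)=\gcd(b_m+b_n+1,a_k-b_\ell)$), whereas you run the equivalent contradiction directly; both are valid and essentially the same parity bookkeeping.
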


\begin{proof}
By (\ref{freeness}) together with $\sum a_i = \sum b_i$ it follows that either all $a_i$ or all $b_i$ have the same parity.  Hence either all $a_i + a_j + 1$ or all $b_m + b_n + 1$ are odd, where $1 \leq i < j \leq 3$, $1 \leq m < n \leq 3$.  Since $\gcd(a_i + a_j + 1, a_k - b_\ell) = \gcd(b_m + b_n + 1, a_k - b_\ell)$, where $\{i,j,k\} = \{\ell,m,n\} = \{1,2,3\}$, the conclusion follows.
\end{proof}

Recall now that one can (isometrically) rewrite the Eschenburg space $E^7_{a,b}$ as $E^7_{a_c,b_c}$, where $a_c = (a_1 + c, a_2 + c, a_3 + c)$ and $b_c = (b_1 + c, b_2 + c, b_3 + c)$, for any $c \in \Z$.  In this case the candidate Bazaikin biquotient is given by the $5$-tuple $(q^c_1, \dots, q^c_5) := (2 (a_1 + c) + 1, 2 (a_2 + c) + 1, 2 (a_3 + c) + 1, -(2 (b_2 + c) + 1), -(2 (b_3 + c) + 1))$, with $q^c = \sum q^c_i = 2 (b_1 + c) + 1$.  Analogously to above, this Bazaikin biquotient is non-singular if and only if 
\beq
\label{eq:modNonSing}
d^c_{k \ell} := \gcd(a_i + a_j + 1 + 2c, a_k - b_\ell) = 1 \ \ \textrm{for all} \ \ \ell \in \{i,j,k\} = \{1,2,3\}.
\eeq

\begin{lem}
\label{lem:free}
Let $a = (a_1, a_2, a_3), b = (b_1, b_2, b_3) \in \Z^3$ satisfy (\ref{freeness}) and $\sum a_i = \sum b_i$.  For all $\ell \in \{i,j,k\} = \{1,2,3\}$, let $p_{k\ell 1}, \dots, p_{k\ell r_{k \ell}}$ be the prime divisors, if any, of $a_k - b_\ell$ which satisfy $\gcd(p_{k \ell \tau}, a_i + a_j + 1) = 1$, for all $1 \leq \tau \leq r_{k \ell}$.  Set 
\beq
\label{eq:cvalue}
c = c_\mu := \pm 2^{\mu - 1} \left(\prod_{k, \ell = 1}^3 \prod_{\tau = 1}^{r_{k \ell}} p_{k \ell \tau} \right)^\mu,
\eeq
where $\mu$ is an arbitrary positive integer.  Then $d^c_{k \ell} = \gcd(a_i + a_j + 1 + 2c, a_k - b_\ell) = 1$.
\end{lem}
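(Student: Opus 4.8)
The plan is to assume, for a contradiction, that some prime $p$ divides $d^c_{k\ell} = \gcd(a_i + a_j + 1 + 2c, a_k - b_\ell)$ for the given indices $\{i,j,k\}=\{1,2,3\}$, $\ell\in\{1,2,3\}$, and to rule this out case by case. Write $P := \prod_{k,\ell=1}^3 \prod_{\tau=1}^{r_{k\ell}} p_{k\ell\tau}$, so that $c = \pm 2^{\mu-1}P^\mu$ and in particular $P \mid c$; the exact exponents and the sign in (\ref{eq:cvalue}) play no role here, serving only to generate infinitely many distinct values of $c$ later. Since $p \mid a_k - b_\ell$, the argument splits according to whether or not $p \mid a_i + a_j + 1$.

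The case $p \nmid a_i + a_j + 1$ is immediate: then $p$ is a prime divisor of $a_k - b_\ell$ coprime to $a_i + a_j + 1$, hence $p = p_{k\ell\tau}$ for some $\tau$, so $p \mid P \mid c$ and therefore $p \mid 2c$. Combined with $p \mid a_i + a_j + 1 + 2c$ this forces $p \mid a_i + a_j + 1$, a contradiction. So I may assume $p \mid a_i + a_j + 1$. Now $p \mid \gcd(a_i + a_j + 1, a_k - b_\ell)$, so $p$ is odd by Lemma \ref{lem:oddprimes}; and again $p \mid 2c$, which for odd $p$ gives $p \mid P$. Hence $p = p_{k'\ell'\tau'}$ for some pair $(k',\ell')$, so that $p \mid a_{k'} - b_{\ell'}$ while $p \nmid a_{i'} + a_{j'} + 1$, where $\{i',j'\} = \{1,2,3\}\setminus\{k'\}$.

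It remains to compare $(k,\ell)$ with $(k',\ell')$, and this is where the freeness condition (\ref{freeness}) and $\sum a_i = \sum b_i$ enter. If $k = k'$, then $\{i',j'\} = \{i,j\}$ and the two facts $p \mid a_i + a_j + 1$ and $p \nmid a_{i'}+a_{j'}+1$ contradict each other at once. If $k \neq k'$ but $\ell = \ell'$, then $p \mid a_k - a_{k'}$; writing $m$ for the third index, $p \mid a_i + a_j + 1$ reads $a_{k'} + a_m \equiv -1 \pmod p$, which via $a_k \equiv a_{k'} \pmod p$ becomes $a_k + a_m \equiv -1 \pmod p$, contradicting $p \nmid a_{i'}+a_{j'}+1 = a_k + a_m + 1$.

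The one genuinely substantive case, and the main obstacle, is $k \neq k'$ and $\ell \neq \ell'$. Here I would let $m$ and $n$ be the remaining $a$- and $b$-indices and use only $p \mid a_k - b_\ell$, $p \mid a_{k'} - b_{\ell'}$ together with $\sum a_i = \sum b_i$: from $a_k \equiv b_\ell$ and $a_{k'} \equiv b_{\ell'}$ modulo $p$ one obtains $a_m = \sum a_i - a_k - a_{k'} \equiv \sum b_i - b_\ell - b_{\ell'} = b_n \pmod p$. Thus $p$ divides each of $a_k - b_\ell$, $a_{k'} - b_{\ell'}$ and $a_m - b_n$, i.e.\ $p$ divides $a_s - b_{\rho(s)}$ for the permutation $\rho$ with $\rho(k)=\ell$, $\rho(k')=\ell'$, $\rho(m)=n$, violating (\ref{freeness}). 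As every case is impossible, $d^c_{k\ell} = 1$. Apart from this last case the argument is bookkeeping; the only point requiring care is to invoke Lemma \ref{lem:oddprimes} exactly in the branch $p \mid a_i + a_j + 1$, so that the odd-prime conclusion is available precisely when it is needed.
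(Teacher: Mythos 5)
Your proposal is correct and follows essentially the same route as the paper: assume a prime $p$ divides $d^c_{k\ell}$, deduce via Lemma \ref{lem:oddprimes} and the definition of $c$ that $p$ must be an odd prime equal to some $p_{k'\ell'\tau'}$, and then derive a contradiction by a case analysis on how $(k',\ell')$ relates to $(k,\ell)$, using the freeness condition (\ref{freeness}) together with $\sum a_i = \sum b_i$. The only difference is organisational (the paper invokes freeness first to reduce to $t=k$ or $u=\ell$ and then applies the same algebraic identity you use in your $\ell=\ell'$ case), so no further comment is needed.
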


\begin{proof}
Suppose that $p$ is a prime divisor of $d^c_{k \ell}$.  As $p$ divides $a_k - b_\ell$, then $p$ divides either $a_i + a_j + 1$ or $c$.  But $p$ divides $a_i + a_j + 1 + 2c$, hence must divide both $a_i + a_j + 1$ and $c$, and furthermore must be odd by Lemma \ref{lem:oddprimes}.  Now, by definition of $c$, $p$ must divide $a_t - b_u$, for some $(t,u) \neq (k, \ell)$, and must satisfy $\gcd(p, a_r + a_s + 1) = 1$, where $\{r,s,t\} = \{1,2,3\}$.  However, because of the freeness condition (\ref{freeness}) for Eschenburg spaces, either $t = k$ or $u = \ell$.  If $t = k$, then $1 = \gcd(p, a_r + a_s + 1) = \gcd(p, a_i + a_j + 1) = p$.  On the other hand, if $u = \ell$ then $k \neq t \in \{1,2,3\}$, that is, it may assumed without loss of generality that $(t,u) = (i,\ell)$.  As $p$ divides $a_i + a_j + 1$, $a_k - b_\ell$ and $a_i - b_\ell$, it follows that $1 = \gcd(p, a_r + a_s + 1) = \gcd(p, a_j + a_k + 1) = \gcd(p, (a_i + a_j + 1) + (a_k - b_\ell) - (a_i - b_\ell)) = p$.
\end{proof}

\begin{cor}
\label{cor:embed}
With notation as above, for all $c = c_\mu$, $\mu \in \Z$, $\mu  > 0$, the Bazaikin biquotient $B^{13}_{q^c_1, \dots, q^c_5}$ is non-singular and contains $E^7_{a,b} = E^7_{a_c,b_c}$ as a totally geodesic submanifold.
\end{cor}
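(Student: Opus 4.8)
The plan is to assemble the corollary from Lemma~\ref{lem:free} together with the explicit construction of Section~\ref{S:TGSubmBaz}. The statement bundles three assertions—that $B^{13}_{q^c_1, \dots, q^c_5}$ is non-singular, that it contains a totally geodesic Eschenburg space, and that this submanifold is $E^7_{a,b}$ itself—each of which has already been prepared in the preceding material. I therefore expect the proof to be a short synthesis rather than a fresh argument, with the only real work being a careful parameter bookkeeping at the end.

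First I would invoke Lemma~\ref{lem:free}. For the choice $c = c_\mu$ as in (\ref{eq:cvalue}), the lemma yields $d^c_{k\ell} = \gcd(a_i + a_j + 1 + 2c, a_k - b_\ell) = 1$ for all $\ell \in \{i,j,k\} = \{1,2,3\}$, which is exactly the criterion (\ref{eq:modNonSing}). By the reductions carried out in the text just before Lemma~\ref{lem:oddprimes} (applied with $a$, $b$ replaced by $a_c$, $b_c$), condition (\ref{eq:modNonSing}) is precisely what remains of the freeness condition (\ref{freeBaz}) for the $5$-tuple $(q^c_1, \dots, q^c_5)$ once one uses $q^c = \sum q^c_i$; since every $q^c_i$ is odd by construction, (\ref{eq:modNonSing}) guarantees that the action of $\syp(2) \cdot S^1_{q^c_1, \dots, q^c_5}$ is free. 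Hence $B^{13}_{q^c_1, \dots, q^c_5}$ is a genuine, non-singular Bazaikin space.

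Next I would appeal to Section~\ref{S:TGSubmBaz}, where it was shown that every Bazaikin space $B^{13}_{q_1, \dots, q_5}$ contains a totally geodesic copy of $\SU(3)$ whose quotient is the Eschenburg space with parameters $a = (\tfrac{1}{2}(q_1-1), \tfrac{1}{2}(q_2-1), \tfrac{1}{2}(q_3-1))$ and $b = (\tfrac{1}{2}(q-1), -\tfrac{1}{2}(q_4+1), -\tfrac{1}{2}(q_5+1))$. Substituting the defining values $q^c_i$ and $q^c = \sum q^c_i = 2(b_1 + c) + 1$ into these formulae returns $a_c = (a_1 + c, a_2 + c, a_3 + c)$ and $b_c = (b_1 + c, b_2 + c, b_3 + c)$; by the reparameterisation discussion of Section~\ref{S:EschBaz}, the resulting Eschenburg space $E^7_{a_c, b_c}$ is isometric to $E^7_{a,b}$, so the totally geodesic submanifold produced is exactly $E^7_{a,b}$. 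The only step demanding genuine care is this substitution—in particular the sign flips in the fourth and fifth entries, where $-\tfrac{1}{2}(q^c_4 + 1)$ must collapse to $b_2 + c$ and $-\tfrac{1}{2}(q^c_5 + 1)$ to $b_3 + c$, together with $\tfrac{1}{2}(q^c - 1) = b_1 + c$. No further geometric or number-theoretic input is needed, since the essential gcd argument was already discharged in Lemma~\ref{lem:free}.
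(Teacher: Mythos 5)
Your proposal is correct and follows exactly the route the paper intends: the corollary is stated without proof precisely because it is the immediate synthesis of Lemma~\ref{lem:free} (giving condition (\ref{eq:modNonSing}), hence non-singularity via the gcd reductions preceding Lemma~\ref{lem:oddprimes}) with the construction of Section~\ref{S:TGSubmBaz} and the reparameterisation invariance $E^7_{a,b} = E^7_{a_c,b_c}$ from Section~\ref{S:EschBaz}. Your explicit verification of the parameter substitution is a correct filling-in of the bookkeeping the paper leaves implicit.
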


\begin{rem}
The conditions (\ref{eq:NonSing}) and (\ref{freeness}) ensuring that the Bazaikin biquotient is non-singular are precisely the conditions which ensure that all ten of the totally geodesic, embedded Eschenburg biquotients are themselves non-singular.  The equivalence of the non-singularity of the Bazaikin biquotient and these ten submanifolds was already observed in \cite{DE}.
\end{rem}

\begin{rem}
Given a Bazaikin space $B^{13}_{q_1, \dots, q_5}$, with $(q_1, \dots, q_5) = (2 a_1 + 1, 2 a_2 + 1, 2 a_3 + 1, -(2 b_2 + 1), -(2 b_3 + 1))$ and $q = \sum q_i = 2 b_1 + 1$, into which the Eschenburg space $E^7_{a, b}$ has a totally geodesic embedding, that is, condition (\ref{eq:NonSing}) holds, then the diffeomorphic, but in general non-isometric, Eschenburg space $E^7_{b,a}$ totally geodesically embeds into the Bazaikin space $B^{13}_{q, -q_4, -q_5, -q_2, -q_3}$.  This follows easily from condition (\ref{eq:NonSing}) together with $\sum a_i = \sum b_i$.  The Bazaikin spaces $B^{13}_{q_1, \dots, q_5}$ and $B^{13}_{q, -q_4, -q_5, -q_2, -q_3}$ are diffeomorphic but not isometric in general (see \cite{EKS}, \cite{FZ}).
\end{rem}

The integral cohomology rings of Eschenburg spaces and Bazaikin spaces are well-known (see \cite{Es2}, \cite{CEZ}, \cite{Baz}, \cite{FZ}).  In particular, if $\sigma_i (x) := \sigma_i (x_1, \dots, x_m)$ denotes the $i^{\rm th}$ symmetric polynomial in $x = (x_1, \dots, x_m)$, then $H^4(E^7_{a,b}) = \Z_r$, where $r = |\sigma_2(a) - \sigma_2(b)|$ is always odd \cite[Remark 1.3]{Kr}, and $H^6(B^{13}_{q_1, \dots, q_5}) = \Z_s$, where $s = \tfrac{1}{8}|\sigma_3(q_1, q_2, q_3, q_4, q_5, -\sum q_i)|$. 

\begin{lem}
\label{lem:order}
Let $a:=(a_1, a_2, a_3), b:=(b_1, b_2, b_3) \in \Z^3$, with $\sum a_i = \sum b_i$, and for $c \in \Z$ define the $6$-tuple
$$
q_c := (2(a_1 + c) + 1, 2(a_2 + c) + 1, 2(a_3 + c) + 1, -2(b_1 + c) - 1, -2(b_2 + c) - 1, -2(b_3 + c) - 1).
$$
If $c \neq d \in \Z$, then $|\sigma_3(q_c)| = |\sigma_3(q_d)|$ if and only if either $\sigma_2(a) = \sigma_2(b)$ or 
$$
c+d = \frac{\sigma_3(a)-\sigma_3(b)}{\sigma_2(a) - \sigma_2(b)} - \sigma_1(a) - 1.
$$
\end{lem}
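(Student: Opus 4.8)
The plan is to compute $\sigma_3(q_c)$ explicitly as a polynomial in $c$ and then read off when two values agree in absolute value. The crucial preliminary observation is that the entries of $q_c$ sum to zero. Writing the three ``$a$-entries'' as $x_i = 2(a_i+c)+1$ and the three ``$b$-entries'' as $y_j = -2(b_j+c)-1$, one has
\[
\sum_i x_i + \sum_j y_j = 2\sum_i a_i - 2\sum_j b_j = 0
\]
by hypothesis. Hence $\sigma_1(q_c)=0$ for every $c$, and Newton's identities for six variables collapse to $\sigma_3(q_c) = \tfrac13\,p_3(q_c)$, where $p_3 = \sum_i x_i^3 + \sum_j y_j^3$ is the third power sum. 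Since $\sigma_3$ is symmetric, the reordering of the last three entries in the lemma's $q_c$ is irrelevant. This replaces the $20$-term elementary symmetric polynomial by a sum of six cubes, which is far easier to control.

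First I would expand the cubes. Setting $u_i := 2a_i+1$ and $v_j := -(2b_j+1)$, so that $x_i = u_i + 2c$ and $y_j = v_j - 2c$, a direct expansion gives
\[
p_3(q_c) = \Big(\sum_i u_i^3 + \sum_j v_j^3\Big) + 6c\Big(\sum_i u_i^2 - \sum_j v_j^2\Big) + 12c^2\Big(\sum_i u_i + \sum_j v_j\Big),
\]
the $c^3$-terms cancelling automatically. Since $\sum_i u_i + \sum_j v_j = 2(\sum_i a_i - \sum_j b_j) = 0$, the quadratic term also vanishes, so $\sigma_3(q_c) = A + Bc$ is \emph{affine} in $c$, with $B = 2(\sum_i u_i^2 - \sum_j v_j^2)$ and $A = \tfrac13(\sum_i u_i^3 + \sum_j v_j^3)$.

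Given this affine form and $c \neq d$, the equality $|\sigma_3(q_c)| = |\sigma_3(q_d)|$ holds exactly when either $\sigma_3(q_c) = \sigma_3(q_d)$, forcing $B(c-d)=0$ and hence $B=0$, or $\sigma_3(q_c) = -\sigma_3(q_d)$, giving $2A + B(c+d) = 0$, i.e.\ $c+d = -2A/B$ when $B\neq 0$. It then remains to express $A$ and $B$ through the symmetric functions of $a$ and $b$, using $\sigma_1(a)=\sigma_1(b)$ throughout. The squares yield $\sum_i u_i^2 - \sum_j v_j^2 = 4(\sum_i a_i^2 - \sum_j b_j^2) = 8(\sigma_2(b)-\sigma_2(a))$, so $B = 16(\sigma_2(b)-\sigma_2(a))$ and $B=0 \iff \sigma_2(a)=\sigma_2(b)$, which is the first alternative of the lemma. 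The cubes, via the three-variable identity $\sum_i a_i^3 = \sigma_1(a)^3 - 3\sigma_1(a)\sigma_2(a) + 3\sigma_3(a)$, give $A = 8(\sigma_3(a)-\sigma_3(b)) - 8(\sigma_1(a)+1)(\sigma_2(a)-\sigma_2(b))$; substituting into $-2A/B$ and simplifying produces exactly $\tfrac{\sigma_3(a)-\sigma_3(b)}{\sigma_2(a)-\sigma_2(b)} - \sigma_1(a) - 1$, the second alternative.

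The argument is essentially forced once $\sigma_1(q_c)=0$ is recognised, and the only genuine obstacle is bookkeeping rather than ideas. The two places demanding care are the systematic use of $\sigma_1(a)=\sigma_1(b)$ both to kill the $c^2$-coefficient and to reduce the power-sum differences, and the sign convention $v_j = -(2b_j+1)$, which must be tracked faithfully through the quadratic and cubic Newton substitutions so that the factors of $8$ and $16$, together with the single net sign, emerge correctly in the final expression for $-2A/B$.
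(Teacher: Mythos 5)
Your proof is correct: the final affine expression $\sigma_3(q_c) = 8(\sigma_3(a)-\sigma_3(b)) - 8(\sigma_1(a)+2c+1)(\sigma_2(a)-\sigma_2(b))$ that your $A$ and $B$ encode agrees exactly with the paper's, and the case split $\sigma_3(q_c)=\pm\sigma_3(q_d)$ is handled the same way. The only difference is the route to that formula. The paper extracts $\sigma_3(q_c)$ as the coefficient of $y^3$ in $\prod_j(y+2a_j+1+2c)\prod_j(y-2b_j-1-2c)$, using the substitutions $y_\pm = y\pm(1+2c)$ to reuse the symmetric functions of $a$ and $b$ directly; you instead observe that $\sigma_1(q_c)=0$, invoke Newton's identities to reduce $\sigma_3$ to the third power sum, and expand six cubes. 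Your route has the advantage of making the collapse to an affine function of $c$ structurally transparent (the $c^3$ and $c^2$ terms cancel for visible reasons), at the cost of an extra conversion step from power sums back to $\sigma_2,\sigma_3$ via the three-variable identity $\sum a_i^3=\sigma_1^3-3\sigma_1\sigma_2+3\sigma_3$; the paper's generating-polynomial trick lands on the symmetric-function form immediately. Both computations check out, including the signs and the factors of $8$ and $16$.
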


\begin{proof}
Recall that $\sigma_i(x)$, the $i^{\rm th}$ symmetric polynomial in $x = (x_1, \dots, x_m)$, is defined as the coefficient of $y^{m-i}$ in the product $\prod_{j=1}^m (y + x_j)$.  Then, with $y_+ = y + 1 + 2c$ and $y_- = y - 1 - 2c$, $\sigma_3 (q_c)$ is given by the coefficient of $y^3$ in the product
\begin{align*}
\prod_{j=1}^3 (y + 2a_j + 1 + 2c) \prod_{j=1}^3 (y - 2b_j - 1 - 2c)
&= \prod_{j=1}^3 (y_+ + 2a_j) \prod_{j=1}^3 (y_- -2 b _j) \\
&= \left(\sum_{j = 0}^3 \sigma_j (a) y_+^{3-j} \right) \left(\sum_{j = 0}^3 \sigma_j (b) y_-^{3-j} \right).
\end{align*}
Hence, since $\sigma_1(a) = \sum a_j = \sum b_j = \sigma_1(b)$,
$$
\sigma_3 (q_c) = 8(\sigma_3(a)-\sigma_3(b)) - 8(\sigma_1(a) + 2c+1)(\sigma_2(a)-\sigma_2(b)).
$$
Now $|\sigma_3(q_c)| = |\sigma_3(q_d)|$ if and only if $\sigma_3(q_c) = \pm \sigma_3(q_d)$, that is, if and only if either 
\begin{align*}
(c - d)(\sigma_2(a)-\sigma_2(b)) &= 0 \ \ \textrm{ or} \\
(c + d)(\sigma_2(a)-\sigma_2(b)) &= (\sigma_3(a)-\sigma_3(b)) - (\sigma_1(a) + 1)(\sigma_2(a)-\sigma_2(b)),
\end{align*}
from which the claim follows.
\end{proof}

\begin{cor}
\label{cor:topology}
The collection of Bazaikin spaces into which a particular Eschenburg space can be totally geodesically embedded consists of infinitely many distinct homotopy types.
\end{cor}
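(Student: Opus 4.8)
The plan is to distinguish the Bazaikin spaces produced in Corollary \ref{cor:embed} by their sixth cohomology group and to show that this single invariant already realises infinitely many values along the family. Since a homotopy equivalence induces isomorphisms on all cohomology groups, it suffices to exhibit infinitely many distinct orders $|H^6|$ among the members.

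First I would record that, for the parameter $c = c_\mu$, the associated non-singular Bazaikin space has $H^6 = \Z_{s_c}$ with $s_c = \tfrac{1}{8}|\sigma_3(q_c)|$, where $q_c$ is exactly the $6$-tuple appearing in Lemma \ref{lem:order} (its sixth entry $-q^c$ being recovered from $q^c = \sum q^c_i$). Thus the task reduces to proving that the set $\{s_{c_\mu} : \mu > 0\}$ is infinite.

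Next I would feed this into Lemma \ref{lem:order}. For a genuine Eschenburg space the order $r = |\sigma_2(a) - \sigma_2(b)|$ of $H^4(E^7_{a,b})$ is odd, hence nonzero, so $\sigma_2(a) \neq \sigma_2(b)$ and the first alternative of the lemma is excluded. Consequently, for $c \neq d$ one has $s_c = s_d$ precisely when $c + d = \Lambda$, where $\Lambda := \frac{\sigma_3(a)-\sigma_3(b)}{\sigma_2(a)-\sigma_2(b)} - \sigma_1(a) - 1$ is a fixed constant depending only on $a$ and $b$.

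The decisive point is that a single equation $c + d = \Lambda$ forces the assignment $c \mapsto s_c$ to be at most two-to-one: three distinct integers mapping to a common value would yield $c_1 + c_2 = c_1 + c_3 = \Lambda$ and hence $c_2 = c_3$, a contradiction. Since the parameters $c_\mu = \pm 2^{\mu-1}(\prod p_{k\ell\tau})^\mu$ take infinitely many distinct values as $\mu$ ranges over the positive integers, their images under an at-most-two-to-one map cannot form a finite set. Therefore $\{s_{c_\mu}\}$ is infinite, and the corresponding Bazaikin spaces represent infinitely many distinct homotopy types. I do not anticipate a genuine obstacle here; the only step demanding attention is the verification that $\sigma_2(a) \neq \sigma_2(b)$, which is precisely where the oddness of $r$ is invoked to rule out the degenerate case of Lemma \ref{lem:order}.
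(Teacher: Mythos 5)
Your proposal is correct and follows essentially the same route as the paper: distinguish the spaces by the order of $H^6$, invoke Lemma \ref{lem:order}, rule out the degenerate alternative via the oddness of $|\sigma_2(a)-\sigma_2(b)|$ from \cite{Kr}, and conclude by a counting argument. The only (cosmetic) difference is that the paper runs the count over powers $c^{\alpha}$ of a single fixed $c_\mu$ and notes that at most one pair can satisfy the linear relation, whereas you observe directly that $c\mapsto s_c$ is at most two-to-one on the infinite family $\{c_\mu\}$ --- an equally valid, arguably tidier, bookkeeping.
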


\begin{proof}
Given an Eschenburg space $E^7_{a,b}$ and $c$ of the form (\ref{eq:cvalue}), it has been shown that there is a totally geodesic embedding of $E^7_{a,b}$ into each Bazaikin space $B^{13}_{q^c_1, \dots, q^c_5}$, where $(q^c_1, \dots, q^c_5) = (2 a_1 + 2c + 1, 2 a_2 + 2c + 1, 2 a_3 + 2c + 1, -2 b_2 - 2c - 1, -2 b_3 -2 c - 1)$.  Fix one such value of $c$, that is, fix some $\mu > 0$.  Since $2^{\mu-1} > 1$ for $\mu > 1$, we may assume without loss of generality that $|c| > 1$.  Let $c_1 = c^{\alpha_1}$ and $c_2 = c^{\alpha_2}$ for some $0 < \alpha_1 < \alpha_2$.  With the same notation as in Lemma \ref{lem:order}, the order of $H^6$ for the Bazaikin space corresponding to $c_i$ is given by $s_i = \tfrac{1}{8} |\sigma_3(q_{c_i})|$, for $i = 1,2$ respectively.  By Lemma \ref{lem:order}, $s_1 = s_2$ if and only if either $\sigma_2(a) - \sigma_2(b) = 0$ or 
\beq
\label{eq:diffcohom}
c_1 + c_2 = \frac{\sigma_3(a)-\sigma_3(b)}{\sigma_2(a) - \sigma_2(b)} - \sigma_1(a) - 1.
\eeq
But, by \cite[Remark 1.3]{Kr}, $\sigma_2(a) - \sigma_2(b)$ is odd, hence non-zero.  Therefore (\ref{eq:diffcohom}) must hold.  However, as the right-hand side is constant, it is clear that (\ref{eq:diffcohom}) can hold for at most one pair $0 < \alpha_1 < \alpha_2$.
\end{proof}

The following example shows that the $c$ values given in Lemma \ref{lem:free} do not achieve all Bazaikin spaces into which an Eschenburg space can be totally geodesically embedded.

\begin{eg}
\label{eg:cohom2}
Consider the positively curved Eschenburg space $E^7_{a,b}$ given by $a = (2,0,0)$ and $b = (15, -2, -11)$.  The corresponding candidate for a Bazaikin space is given by $q = (5, 1, 1, 3, 21)$, which is singular because $\gcd(q_1 + q_2, q_4 + q_5) = 6$.

By the expression in (\ref{eq:cvalue}), $E^7_{a,b}$ can be totally geodesically embedded into each of the Bazaikin spaces given by $(5 + 2c, 1 + 2c, 1 + 2c, 3 - 2c, 21 - 2c)$, with $c = \pm 2^{\mu - 1}(2^3 \cdot 5^2 \cdot 11^2 \cdot 13^2)^\mu$, for $\mu > 0$.  None of these spaces is positively curved, since it is clear that any value of $\mu > 0$ leads to $q_i + q_j$ of mixed signs.

On the other hand, note that when $c = -1$ the Eschenburg space $E^7_{a,b}$ can be rewritten as $E^7_{\tilde a, \tilde b}$, with $\tilde a = (1, -1, -1)$ and $\tilde b = (14, -3, -12)$, and the corresponding Bazaikin biquotient $B^{13}_{\tilde q}$ with $\tilde q = (3, -1, -1, 5, 23)$ is non-singular. Note that $B^{13}_{\tilde q}$ does not have positive curvature.

However, when $c = 2$ and when $c = 5$, the resulting biquotients are both non-singular and positively curved, namely the Bazaikin spaces $B^{13}_{q'}$, with $q' = (9, 5, 5, -1, 17)$ (corresponding to $a' = (4, 2, 2)$, $b' = (17, 0, -9)$), and $B^{13}_{q''}$, with $q'' = (15, 11, 11, -7, 11)$ (corresponding to $a' = (7, 5, 5)$, $b' = (20, 3, -6)$).

Finally, notice that the order of $H^6$ is $503$, $1541$ and $2579$ for $B^{13}_{\tilde q}$, $B^{13}_{q'}$ and $B^{13}_{q''}$ respectively, hence these spaces are not even homotopy equivalent.
\end{eg}

%==============================================
%   EMBEDDINGS INDUCING POSITIVE CURVATURE
%==============================================

\section{Embeddings inducing positive sectional curvature}
\label{S:curv}

Given that every Eschenburg space can be totally geodesically embedded into infinitely many Bazaikin spaces, it is natural to ask whether a positively curved Eschenburg space admits a totally geodesic embedding into a positively curved Bazaikin space.

Recall that an Eschenburg space $E^7_{a,b}$ has positive curvature if and only if $b_i \not\in [\underline{a}, \overline{a}]$ for all $i=1,2,3$, where $\underline{a} := \min \{a_1, a_2, a_3 \}$, $\overline{a} := \max \{a_1, a_2, a_3 \}$.  Because $\sum a_j = \sum b_j$, this is equivalent to the requirement that two of the $b_i$ lie on one side of $[\underline{a}, \overline{a}]$, and one on the other.  Indeed, given the metric on $E^7_{a,b}$ defined in Section \ref{S:EschBaz}, it turns out that $b_2$ and $b_3$ must lie on the same side of $[\underline{a}, \overline{a}]$, see \cite{Es}, \cite{Ke}.  Since permuting the $a_i$ and permuting $b_2$ and $b_3$ are isometries, the condition for positive curvature is, after relabelling if necessary, equivalent to
\beq
\label{poscond}
            b_3 \leq b_2 < a_3 \leq a_2 \leq a_1 < b_1 \ \ 
\textrm{ or } \ \ 
            b_1 < a_1 \leq a_2 \leq a_3 < b_2 \leq b_3.
\eeq
In fact, the second chain of inqualities is equivalent to the first via the reparametrization of $S^1_{a,b}$ via $z \mt \bar z$, and therefore one may restrict attention to the first chain.  

Now $E^7_{a,b}$ can be totally geodesically embedded into the (possibly singular) Bazaikin biquotient $B^{13}_{q_1, \dots, q_5}$, with $(q_1, \dots, q_5) = (2 a_1 + 1, 2 a_2 + 1, 2 a_3 + 1, -(2 b_2 + 1), -(2 b_3 + 1))$.  As discussed in Section \ref{S:EschBaz}, $B^{13}_{q_1, \dots, q_5}$ has positive curvature if and only if $q_i + q_j > 0$ (or $<0$) for all $1 \leq i < j \leq  5$.  Since the first chain of inequalities in (\ref{poscond}) implies that $q_i + q_j = 2(a_i - b_{j-2}) > 0$, for all $i = 1,2,3$, $j = 4,5$, it follows that $B^{13}_{q_1, \dots, q_5}$ has positive curvature if $a_i + a_j + 1 > 0$, for all $1 \leq i < j \leq 3$, and $b_2 + b_3 + 1 < 0$.  But $a_3 \leq a_2 \leq a_1$, hence $B^{13}_{q_1, \dots, q_5}$ has positive curvature if and only if
\beq
\label{eq:Bazcurv}
b_2 + b_3 + 1 < 0 < a_2 + a_3 + 1.
\eeq
In particular, it is necessary that $0 \leq a_2 \leq a_1 < b_1$ and $b_3 \leq -1$.  Clearly the special case $b_2 < 0 \leq a_3$ ensures that the inequalities in (\ref{eq:Bazcurv}) are satisfied.  

If $B^{13}_{q_1, \dots, q_5}$ does not have positive curvature, then one can find a $c \in \Z$ such that the Bazaikin biquotient $B^{13}_{q^c_1, \dots, q^c_5}$, with 
$$
(q^c_1, \dots, q^c_5) := 
(2 (a_1 + c) + 1, 2 (a_2 + c) + 1, 2 (a_3 + c) + 1, -(2 (b_2 + c) + 1), -(2 (b_3 + c) + 1)),
$$
is positively curved.  Indeed, $B^{13}_{q^c_1, \dots, q^c_5}$ has positive curvature if and only if $c \in Z$ satisfies
\beq
\label{eq:Bazcurvmod}
-\frac{1}{2}(a_2 + a_3 + 1) < c < -\frac{1}{2}(b_2 + b_3 + 1).
\eeq
It then remains only to examine the values of $c$ given by (\ref{eq:Bazcurvmod}) to determine which, if any, of the Bazaikin biquotients $B^{13}_{q^c_1, \dots, q^c_5}$ are non-singular.

One particular example, where an embedding into a positively curved Bazaikin space exists, is that of a positively curved Eschenburg space of cohomogeneity-one, given by $a = (p, 1, 1)$, $b = (p+2, 0, 0)$, with $p \geq 1$.  Here (\ref{eq:Bazcurvmod}) yields $-1 \leq c \leq 0$.  Indeed, $c = -1$ ensures that $B^{13}_{q^c_1, \dots, q^c_5}$ is non-singular and positively curved.  This was first observed by W.\ Ziller and appeared in \cite{DE}.

Note that in this example $c = - a_3 = -1$.  Indeed, for an arbitrary Eschenburg space $E^7_{a,b}$, the choice $c = -a_3$ ensures $B^{13}_{q^c_1, \dots, q^c_5}$ has positive curvature, although in general one cannot hope that this space will be non-singular.  Example \ref{eg:cohom2} illustrates this phenomenon and, furthermore, that the values of $c$ suggested by the expression (\ref{eq:cvalue}) are not of much use when it comes to finding a Bazaikin space of positive curvature.  

It turns out, in fact, that there are examples of positively curved Eschenburg spaces for which none of the values of $c$ coming from (\ref{eq:Bazcurvmod}) yield a non-singular Bazaikin biquotient, that is, a totally geodesic embedding constructed as in Section \ref{S:Converse} cannot be into a positively curved Bazaikin space.  A list of such examples is given in Table \ref{tab:fail}.  Notice, in particular, that the first and last examples in Table \ref{tab:fail} are Eschenburg spaces of cohomogeneity-two (see \cite{GSZ}).  In fact, there are infinitely many such cohomogeneity-two examples, for example, the infinite families given by $a = (15015k + 39, 0, 0)$, $b = (15015k + 55, -3, -13)$, $k \geq 0$, and $a = (15015k + 12909, 0, 0)$, $b = (15015k + 12925, -3, -13)$, $k \geq 0$, respectively.

\begin{rem}
\label{rem:reduce}
One might hope that introducing an ineffective kernel into the $S^1_{a,b}$ action defining a positively curved Eschenburg space $E^7_{a,b}$ would yield a Bazaikin space with positive curvature into which it can be totally geodesically embedded.  However, such a modification reduces to the case discussed above.  Indeed, if $\tilde a = (\lambda a_1 + d,\lambda a_2 + d, \lambda a_3 + d)$ and $\tilde b = (\lambda b_1 + d, \lambda b_2 + d, \lambda b_3 + d)$, where without loss of generality $\lambda > 0$, then, by (\ref{freeness}), $\gcd(\tilde a_i - \tilde b_\ell, \tilde a_j - \tilde b_m) = \lambda$.  Hence the candidate for a positively curved Bazaikin space $B^{13}_{\tilde q}$ is given by $\tilde q = (2 \tilde a_1 + 1, 2 \tilde a_2 + 1, 2 \tilde a_3 + 1, -(2 \tilde b_2 + 1), -(2 \tilde b_3 + 1))$, where $\gcd(\tilde a_i + \tilde a_j + 1, \tilde a_k - \tilde b_\ell) = \lambda$, for all $\ell \in \{i,j,k\} = \{1,2,3\}$, and $\tilde a_2 + \tilde a_3 + 1 > 0 > \tilde b_2 + \tilde b_3 + 1$.  Consequently $\lambda$ must be odd and there is $c \in \Z$ such that $2 d + 1 = \lambda (2 c + 1)$, in which case $B^{13}_{\tilde q}$ is non-singular if and only if (\ref{freeness}) and (\ref{eq:modNonSing}) hold, and has positive curvature if and only if (\ref{eq:Bazcurvmod}) is satisfied.
\end{rem}

\begin{table}[!ht]
\begin{tabular}{|c|c|c|c|} \hline
\multicolumn{2}{|c|}{$\mathbf{ E^7_{a,b}}$} & \multicolumn{2}{|c|}{$\mathbf{ B^{13}_{q^c_1, \dots, q^c_5}}$}\\ \hline 
$\mathbf{ a}$ & $\mathbf{ b}$ & $\mathbf{ (q^c_1, \dots, q^c_5)}$ & $\mathbf{ \sec > 0}$ \\ \hline \hline

$(39, 0, 0)$ & $(55, -3, -13)$ & $(79 + 2c, 1 + 2c, 1 + 2c, 5 - 2c, 25 - 2c)$ & $0 \leq c \leq 7$ \\ \hline
$(77, 2, 0)$ & $(93, -3, -11)$ & $(155 + 2c, 5 + 2c, 1 + 2c, 5 - 2c, 21 - 2c)$ & $-1 \leq c \leq 6$ \\ \hline
$(171, 2, 0)$ & $(187, -3, -11)$ & $(343 + 2c, 5 + 2c, 1 + 2c, 5 - 2c, 21 - 2c)$ & $-1 \leq c \leq 6$ \\ \hline
$(225, 4, 0)$ & $(247, -5, -13)$ & $(451 + 2c, 9 + 2c, 1 + 2c, 9 - 2c, 25 - 2c)$ & $-2 \leq c \leq 8$ \\ \hline
$(281, 3, 0)$ & $(294, -2, -8)$ & $(563 + 2c, 7 + 2c, 1 + 2c, 3 - 2c, 15 - 2c)$ & $-1 \leq c \leq 4$ \\ \hline
$(309, 6, 0)$ & $(323, -3, -5)$ & $(619 + 2c, 13 + 2c, 1 + 2c, 5 - 2c, 9 - 2c)$ & $-3 \leq c \leq 3$ \\ \hline
$(664, 2, 0)$ & $(678, -3, -9)$ & $(1329 + 2c, 5 + 2c, 1 + 2c, 5 - 2c, 17 - 2c)$ & $-1 \leq c \leq 5$ \\ \hline
$(827, 4, 0)$ & $(843, -3, -9)$ & $(1655 + 2c, 9 + 2c, 1 + 2c, 5 - 2c, 17 - 2c)$ & $-2 \leq c \leq 5$ \\ \hline
$(12909, 0, 0)$ & $(12925, -3, -13)$ & $(25819 + 2c, 1 + 2c, 1 + 2c, 5 - 2c, 25 - 2c)$ & $0 \leq c \leq 7$ \\ \hline
\end{tabular}
\vspace{.1cm}
\caption{Eschenburg spaces which do not totally geodesically embed into a positively curved, non-singular Bazaikin space via the construction of Section \ref{S:Converse}.}
\label{tab:fail}
\end{table}

%=========================================
%             BIBLIOGRAPHY
%=========================================

\end{document}